\documentclass[12pt]{amsart}

\usepackage{amsmath,amssymb}
\usepackage{amsfonts}
\usepackage{amsthm}
\usepackage{latexsym}
\usepackage{graphicx}
\usepackage{epsfig}

\newtheorem{theorem}{Theorem}[section]
\newtheorem{lemma}[theorem]{Lemma}

\newtheorem{corollary}[theorem]{Corollary}

\newtheorem{remark}{Remark}[section]
\newtheorem{definitions}{Definition}[section]

\makeatletter \@addtoreset{equation}{section} \makeatother

\def\ddt{\frac{d}{dt}}
\def\ppt{\frac{\partial}{\partial t}}
\def\PP{{\mathrm P}}
\def\QQ{{\mathrm Q}}
\def\RR{{\mathrm R}}
\def\WW{{\mathrm W}}

\def\EE{{\mathrm E}}

\def\Rc{{\mathrm {Rc}}}

\def\Rm{{\mathrm {Rm}}}

\setlength{\oddsidemargin}{0.25in}  
\setlength{\evensidemargin}{0.25in} 
\setlength{\topmargin}{0.0in}       
\setlength{\textwidth}{6.0in}       
\setlength{\textheight}{8.5in}      

\begin{document}

\title{Curvature Pinching Estimate And Singularities Of The Ricci Flow}

\author{Xiaodong Cao$^*$
}
\thanks{$^*$Research partially supported by NSF grant DMS 0904432 and by the Jeffrey Sean Lehman Fund from Cornell University}

\address{Department of Mathematics,
 Cornell University, Ithaca, NY 14853-4201}
\email{cao@math.cornell.edu}



\renewcommand{\subjclassname}{%
  \textup{2000} Mathematics Subject Classification}
\subjclass[2000]{Primary 53C44}

\date{Oct. 28, 2010}

\maketitle

\markboth{Xiaodong Cao} {Curvature Pinching Estimate and Singularities of The Ricci Flow}

\begin{abstract}In this paper, we first derive a pinching estimate on the traceless Ricci curvature in term of scalar curvature and Weyl tensor under the Ricci flow. Then we apply this estimate to study finite-time singularity behavior. We show that if the scalar curvature is uniformly bounded, then the Weyl tensor has to blow up, as a consequence, the corresponding singularity model must be Ricci flat with non-vanishing Weyl tensor.
\end{abstract}

\section{\textbf{Introduction}}

Let $(M,g)$ be a smooth, closed $n$-dimensional Riemannian manifold. In his seminal paper \cite{H3}, R. Hamilton proved that any closed $3$-manifold which admits a Riemannian metric with strictly positive Ricci curvature must also admits a metric of constant positive sectional curvature. He showed that the original metric can be deformed into the constant-curvature metric by introducing the Ricci flow:
\begin{align}
  \frac{\partial }{\partial t}g_{ij}= -2R_{ij}. \label{rf}
\end{align}
The Ricci flow equation is a (weakly) parabolic partial differential equation system. Its short time existence was first proved by Hamilton (\cite{H3}) and later the proof was simplified by D. DeTurck (\cite{deturk1}). One of the main subjects in the study of Ricci flow is the understanding of long time behavior and formation of singularities. More precisely, we would like to ask when the flow can exist for all time; and if the flow only exits up to finite time, we would like to understand the profile of finite-time singularities, which in general will permit us to understand geometry and topology of the underlying manifold better.\\

A solution $(M,g(t))$ to the Ricci flow equation (\ref{rf}) is a (finite-time) maximal
solution if it is defined for  $t\in [0, T),~ T<\infty$. In
\cite{H3}, Hamilton proved that the whole Riemannian curvature
tensor $\Rm$ blows up as $t\rightarrow T$, i.e., $\lim \sup_{[0,T)} |\Rm|=\infty$. In \cite{sesum051}, N.
Sesum showed that in fact the Ricci curvature tensor $\Rc$ blows
up as $t\rightarrow T$, i.e., $\lim \sup_{[0,T)} |\Rc|=\infty$.
In other words, if the norm of Riemannian curvature or Ricci curvature
is uniformly bounded on $[0, T)$, then the flow can be smoothly extended
past $T$.  In \cite{wangb08}, B. Wang extended the above results even
further by showing that that if the Ricci curvature tensor $\Rc$ is
uniformly bounded from below and moreover, the space-time integral of
scalar curvature $\RR$ is bounded, namely,
$$\int_0^T \int_M |\RR|^{\alpha} \leq C,~\alpha \geq \frac{n+2}{2},$$
then the Ricci flow can be smoothly extended past $T$. Similar
type results also appeared in \cite{ye084}   by R. Ye and in
\cite{mc10} by L. Ma and L. Cheng.\\

There is a well-known conjecture that the
scalar curvature $\RR$ should also blow up at the singular time $T$. Recently, J. Enders, R. M\"{u}ller, P. M. Topping (\cite{emt10}), and N. Le, Sesum (\cite{ls10}) partially confirmed this conjecture in the case of Type I maximal solutions. Their methods both using blow-up argument based on Perelman's entropy functionals, reduced distance and pseudolocality theorem. \\

\begin{definitions}
A solution $(M,g(t))$, $0\leq t < T <\infty$, is called a Type I maximal solution of the Ricci flow, if there exists a constant $C<\infty$ such that the curvature satisfies $$|\Rm| \leq \frac{C}{T-t}.$$ Otherwise it's a Type II maximal solution of the Ricci flow.
\end{definitions}

In this paper, we study the blow-up behavior of different components of the curvature tensor under the Ricci flow, and their consequences in dilation limits.

For simplicity, we also use the following convention: the
constants $c_i$ only depend on the dimension $n$, but not on the
initial metric $g(0)$; while the constants $C_i$ depend not only
on the dimension $n$, but also on the initial metric $g(0)$. We
also restrict ourselves to the case of \textbf{positive scalar
curvature}, even though that most estimates in this paper can be
carried to the general case.

The rest of this paper organized as follows. In Section 2, we
briefly review the orthogonal decomposition of Riemannian curvature and evolution of curvatures under the Ricci flow. In Section 3, we derive a pinching estimate on the traceless Ricci curvature tensor. As one application, we obtain some information about curvature blow up at finite-time; the second application is on manifolds with positive isotropic curvature. In Section 4, we discuss singularity models and apply the pinching estimate in Section 3 to study the dilation limit.


\section{\textbf{Decomposition and Evolution of Curvature Tensors}}

In this section, we will first give a brief introduction of
curvature decomposition of the Riemannian manifold $(M\sp n,g)$
and some relations of geometric conditions. Then we will recall
some evolution formulae for various curvature tensors, for more
details, please see \cite{H3}. We use $g_{ij}$ to denote the local
components of  metric $g$ and its inverse by $g^{ij}$. In this
paper we use $\Rm$ to denote the $(4,0)$ Riemannian curvature
tensor instead of the $(3,1)$ Riemannian curvature tensor, we
denote its local components by $\RR_{ijkl}$. Let $\Rc$ be the
Ricci curvature with local components $\RR_{ik}=g^{jl}\RR_{ijkl}$,
and let $\RR=g^{ik}\RR_{ik}$ be the scalar curvature. We first
recall the Kulkarni-Nomizu product for two symmetric tensor $h$,
$k$ is defined as:
\begin{align*}
h\circ k(v_1, v_2, v_3, v_4)=&h(v_1, v_3)k(v_2,v_4)+h(v_2,
v_4)k(v_1, v_3)\\ & -h(v_1, v_4)k(v_2, v_3)- h(v_2, v_3)k(v_1,
v_4).
\end{align*}
The Einstein tensor or traceless Ricci tensor $\EE$ is defined as

$$\EE_{ij}=\RR_{ij}-\frac{\RR}{n} g_{ij}.$$
When $n\geq 4$, we can decompose the $(4,0)$ Riemannian curvature
tensor $\Rm$ in the following way:

$$\Rm=\frac{\RR}{2n(n-1)}g\circ g+\frac{1}{n-2} \EE\circ g +\WW,$$
here $\WW$ is the Weyl curvature tensor. And the above
decompositions are orthogonal.\\

In local coordinates, we can write
\begin{align*}
\WW_{ijkl}=&
\RR_{ijkl}-\frac{1}{n-2}(g_{ik}\RR_{jl}+g_{jl}\RR_{ik}
-g_{il}\RR_{jk}-g_{jk}\RR_{il})\\
&+\frac{1}{(n-1)(n-2)}\RR(g_{ik}g_{jl}-g_{il}g_{jk}).
\end{align*}
It is well-known that under conformal change of the metric
$g^{'}=e^{u}\cdot g$ for some function $u$, then
$\WW^{'}=e^{u}\cdot \WW$. If we view the Weyl tensor as a $(3,1)$
tensor, then $\WW^{'}=\WW$, i.e., the $(3,1)$ Weyl tensor is a
conformal invariant.\\

Under the Ricci flow, the Ricci curvature is evolving by $$\ppt
\Rc=\triangle \Rc+2\Rm(\Rc,\cdot)-2\Rc^2,$$ where
$\Rc^2_{ij}=\RR_{ik}\RR_{kj}$ and the scalar curvature evolves by
\begin{align}\label{scalar}
\ppt \RR=\triangle
\RR+2|\Rc|^2.
\end{align}
As a direct consequence of (\ref{scalar}), in all dimensions, the
positivity (or any lower bound) of the scalar curvature is
preserved by the Ricci
flow. In dimension $3$, the positivity of Ricci curvature is preserved (see \cite{H3}). In dimension at least $4$, positivity of curvature operator is preserved (\cite{HPCO} and \cite{Hsurvey}).\\

In \cite{mm88}, M. Micallef and J. D. Moore introduced a new curvature
condition, positive isotropic curvature. A Riemannian manifold of
dimension at least $4$ is said to have positive isotropic curvature,
if for every orthonormal $4$-frame $\{e_1, e_2, e_3, e_4\}$, we have
$$\RR_{1313}+\RR_{1414}+\RR_{2323}+\RR_{2424}-2\RR_{1234}>0.$$
Using minimal surface technique, they proved that
 any compact, simply connected manifold with
 positive isotropic curvature is homeomorphic to
 $S\sp n$. In the
same paper, they observed that the positivity of isotropic
curvature is implied by several other commonly used curvature
conditions, such as positive curvature operator and pointwise
$\frac14$-pinched condition. In dimension $4$, Hamilton
\cite{HPIC} proved that the positivity of isotropic curvature is
preserved by the Ricci flow. This result has been extended to
higher dimensions by S. Brendle and R. Schoen \cite{bs091} and
also by H. Nguyen \cite{nguyen10} independently. Brendle and
Schoen further proved the differentiable sphere theorem, which has
been a long time conjecture since the (topological)
$\frac14$-pinched sphere theorem proved by M. Berger
\cite{berger60} and W. Klingenberg \cite{kl61} around 1960. More
precisely, Brendle and Schoen showed that any compact Riemannian
manifold with pointwise $\frac14$-pinched
sectional curvature is diffeomorphic to a spherical space form \cite{bs091}.\\

Another interesting geometric operator in Riemannian geometry, the
Weitzenb\"{o}ck operator $\PP$, is defined as
$$\PP=\Rc \circ g -2\Rm= \frac{(n-2)\RR}{n(n-1)}g\circ
g+\frac{n-4}{n-2} \EE\circ g -\WW,$$ or in local coordinates,

$$\PP_{ijkl}=(g_{ik}\RR_{jl}+g_{jl}\RR_{ik}
-g_{il}\RR_{jk}-g_{jk}\RR_{il})-2R_{ijkl}.$$

It is known that in dimension $4$, positive isotropic curvature is
equivalent to positive Weitzenb\"{o}ck operator (see for example,
\cite{mm88, mw93, noronha94, noronha97}). For an even dimensional
Riemannian manifold of $n>4$, positive isotropic curvature implies
positive Weitzenb\"{o}ck operator (\cite[Proposition
1.1]{seaman93}).

\section{\textbf{Curvature Pinching Estimate}}

The general evolution formulae of curvature tensors suggests that
the orthogonal parts of Riemannian curvature tensor is not
evolving totally independently to each other, one part might
depend on the other part(s). An interesting question in the study
of the Ricci flow is which orthogonal part(s) needs to blow up at
a finite-time $T$ when singularity occurs. In other words, if
these parts are uniformly bounded up to time $T$, then the Ricci
flow
can be smoothly extended past $T$.\\

Our main theorem in this section is the following estimate, which
 says that the traceless Ricci part
$|\EE|$ can be controlled by  the scalar curvature $\RR$ and  Weyl
tensor  $|\WW|$. This improves an earlier result of D. Knopf
\cite{knopf092}.

\begin{theorem}
\label{thm:einstein} Let $(M\sp n, g(t))$, $t\in [0,T)$, be a
solution to the Ricci flow on a closed Riemannian manifold of
dimension $n\geq 3$, then there exist constants  $C_1(n,g_0)>0$
and $c_2(n)\geq 0$, such that for all $t\geq 0$, one has $\RR+c>0$
and
\begin{align}\label{gr}
\frac{|\EE|}{\RR+c} \leq C_1+c_2 \max_{M\times [0,t]}
\sqrt{\frac{|\WW|}{\RR+c}}.
\end{align}
Furthermore, if $\RR>0$ at $t=0$, then we have
\begin{align}\label{pr} \frac{|\EE|}{\RR} \leq C_1+c_2 \max_{M\times [0,t]}
\sqrt{\frac{|\WW|}{\RR}}.
\end{align}
\end{theorem}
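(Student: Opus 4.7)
The plan is to apply the parabolic maximum principle to the dimensionless ratio
\[F := \frac{|\EE|^2}{(\RR+c)^2}.\]
First I secure the denominator: choose $c$ so that $\RR(\cdot,0)+c>0$; since \eqref{scalar} yields $\ppt\RR \geq \Delta\RR+\tfrac{2}{n}\RR^2 \geq \Delta\RR$, the weak maximum principle forces $\min_M\RR(\cdot,t)\geq\min_M\RR(\cdot,0)$, so $\RR+c$ is bounded below by a positive constant on $M\times[0,T)$. For \eqref{pr} the same argument works with $c=0$, since positivity of $\RR$ is preserved by \eqref{scalar}.

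Next I compute $(\ppt-\Delta)F$ from the Ricci and scalar-curvature evolution equations of Section~2, using $|\Rc|^2 = |\EE|^2+\RR^2/n$. The time derivative of $(\RR+c)^{-2}$ generates the key favourable reaction term $-4F^2(\RR+c)$, arising from the $2|\EE|^2$ piece of $\ppt\RR$. The reaction in $\ppt|\EE|^2$ produces cubic-in-curvature couplings $R_{ikjl}E^{ij}E^{kl}$ and $\mathrm{tr}(\EE^3)$; substituting the orthogonal decomposition
\[\Rm=\frac{\RR}{2n(n-1)}\,g\circ g+\frac{1}{n-2}\,\EE\circ g+\WW,\]
the $g\circ g$ piece collapses (by tracelessness of $\EE$) into $\tfrac{\RR}{n-1}|\EE|^2$, the $\EE\circ g$ piece contributes $\mathrm{tr}(\EE^3)$-type scalars bounded by $|\EE|^3$, and the Weyl piece survives as $W_{ikjl}E^{ij}E^{kl}$, controlled by $|\WW|\,|\EE|^2$. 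Dividing by $(\RR+c)^2$ turns these into contributions of order $F(\RR+c)$, $F^{3/2}(\RR+c)$, and $|\WW|F$ respectively, and I expect a schematic inequality
\[(\ppt-\Delta)F \leq \langle X,\nabla F\rangle - c_0\, F^2(\RR+c) + c_1\, F^{3/2}(\RR+c) + c_2\,|\WW|\,F + c_3\,F,\]
with positive constants $c_0,\ldots,c_3$ depending only on $n$.

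At a spacetime maximum $(x_0,t_0)$ of $F$ on $M\times[0,t]$ with $t_0>0$, the left side is nonnegative and the gradient cross-term vanishes; dividing by $F>0$ gives $c_0 F(\RR+c) \leq c_1 F^{1/2}(\RR+c) + c_2|\WW| + c_3$. Splitting into the cases $F^{1/2}\leq 2c_1/c_0$ and $F^{1/2}>2c_1/c_0$, and absorbing the $c_3$ term via the positive lower bound on $\RR+c$ from the first step, I obtain $F \leq C_1^2 + c_2^2\,|\WW|/(\RR+c)$ at the max point; if instead $t_0=0$ the initial value of $F$ is absorbed into $C_1^2$. Taking square roots, using $\sqrt{a+b}\leq\sqrt a+\sqrt b$, and replacing $|\WW|/(\RR+c)$ by its maximum over $M\times[0,t]$ yields \eqref{gr}; the proof of \eqref{pr} is identical with $c=0$.

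The main obstacle is the tensorial calculation producing the displayed inequality with both signs correct: one must verify that the favourable $-F^2(\RR+c)$ term genuinely survives after combining the scalar-curvature reaction $-4F|\EE|^2/(\RR+c)$ with all the quadratic-in-$|\EE|$ contributions coming from the decomposition of $\Rm$, and that the Weyl tensor enters only linearly as $|\WW|F$ with no higher powers. Only with this clean separation does the characteristic balance $F\sim|\WW|/(\RR+c)$ at a maximum emerge from the quadratic inequality, and thereby the square-root structure of \eqref{gr}--\eqref{pr}.
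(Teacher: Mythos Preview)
Your proposal is correct and follows essentially the same route as the paper: set $f=|\EE|^2/\RR^2$, compute its evolution using the orthogonal decomposition of $\Rm$, bound $|\EE^3|\lesssim|\EE|^3$ and $|\WW(\EE,\EE)|\lesssim|\WW|\,|\EE|^2$, and apply the maximum principle to the resulting quadratic inequality in $f^{1/2}$. One small remark: the paper's explicit calculation (its Lemma for $\gamma=2$) shows that the linear reaction term actually appears with the \emph{favorable} sign, namely $-\tfrac{4\RR}{n(n-1)}f$, so your $+c_3F$ term and the subsequent absorption via the lower bound on $\RR+c$ are unnecessary in the $c=0$ case---though your argument is still valid and only enlarges $C_1$.
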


\begin{remark}
In  \cite{knopf092}, Knopf first proved a rather surprising
result, namely he showed that under the Ricci flow, there exist
constants $c(g_0)\geq 0$, $C_1(n,g_0)>0$ and $c_2(n)>0$ such that
for all $t\geq 0$, one has $R+c>0$ and
\begin{align}\label{dk}
\frac{|\EE|}{\RR+c} \leq C_1+c_2 \max_{s\in[0,t]}
\sqrt{\frac{|\WW|_{\max}(s)}{\RR_{\min}(s)+c}}.
\end{align} In other words, the traceless Ricci part $|\EE|$ can be
controlled by the maximum and minimum of the scalar curvature
$\RR$ and the maximum of the Weyl tensor $|\WW|$. Notice that $|\WW|_{\max}$ and $\RR_{\min}$ may actually be achieved at different space-time points in (\ref{dk}),  while estimates (\ref{gr}) and (\ref{pr}) proved here do not have this problem. This makes (\ref{gr}) and (\ref{pr}) more powerful in studying dilation limits of singularities.
\end{remark}

\begin{remark}
We state the theorem both for the general case and for the
positive scalar curvature case. But for simplicity, we will only
prove (\ref{pr}) here, the proof of (\ref{gr}) is similar.
\end{remark}

\begin{remark}
The estimate (\ref{pr}) is scaling invariant, so it still holds
for normalized Ricci flow and also for Ricci flow solutions exist
for all time $[0, \infty)$. In the special case of
K\"{a}hler-Ricci flow, it is known that the scalar curvature is
bounded (this is claimed by G. Perelman and a detailed proof is
given by Sesum and G. Tian \cite{st08}), hence the whole curvature
tensor blows up if and only if the Weyl tensor $\WW$ blows up
(also see \cite{mc10}).
\end{remark}

For our purpose, we perform a rather general calculation here. For
any positive number $\gamma$, define
$$f=\frac{|\EE|^2}{\RR^{\gamma}}=\frac{|\Rc|^2}{\RR^{\gamma}}-
\frac{1}{n}\RR^{2-\gamma},$$ then $f$ satisfies the following
evolution equation:
\begin{lemma} Under the Ricci flow, we have
\begin{align}
\ppt f= &\triangle f+\frac{2(\gamma -1)}{\RR}\nabla f \cdot \nabla
\RR -\frac{2}{\RR^{2+\gamma}} |\RR\nabla_i \RR_{jk}-\nabla_i \RR
\RR_{jk}|^2 -\frac{(2-\gamma)(\gamma-1)}{\RR^{2}}|\nabla \RR|^2 f
\nonumber
\\ &
-\frac{2(2-\gamma)}{n}{\RR^{1-\gamma}}|\Rc|^2 \nonumber
+\frac{4}{\RR^{\gamma}}\Rm(\Rc,\Rc)-\frac{2\gamma}{\RR^{1+\gamma}}
|\Rc|^4-\frac{(2-\gamma)(\gamma-1)}{n\RR^{\gamma}}
\nonumber \\
=&\triangle f+\frac{2(\gamma -1)}{\RR}\nabla f \cdot \nabla \RR
-\frac{2}{\RR^{2+\gamma}} |\RR\nabla_i \RR_{jk}-\nabla_i \RR
\RR_{jk}|^2
-\frac{(2-\gamma)(\gamma-1)}{\RR^{2}}|\nabla \RR|^2 f \nonumber \\
&+\frac{2}{\RR^{1+\gamma}}[(2-\gamma)|\Rc|^2(|\Rc|^2-\frac{1}{n}\RR^2)
-2(|\Rc|^4-\RR\cdot
\Rm(\Rc,\Rc)]-\frac{(2-\gamma)(\gamma-1)}{n\RR^{\gamma}}.
\nonumber
\end{align}
\end{lemma}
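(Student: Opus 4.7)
The plan is to derive the evolution equation by direct computation from the standard Ricci flow evolutions of $|\Rc|^2$ and $\RR$, and then to reorganize the resulting gradient and algebraic terms into the three displayed pieces. First I would record the auxiliary identity
$$\ppt |\Rc|^2 = \triangle|\Rc|^2 - 2|\nabla\Rc|^2 + 4\Rm(\Rc,\Rc),$$
which follows from Hamilton's Lichnerowicz-type formula $\ppt\RR_{ij} = \triangle\RR_{ij} + 2\RR_{ikjl}\RR^{kl} - 2\RR_{ik}\RR^k_j$ after pairing with $\Rc$ and absorbing the $\ppt g^{ij} = 2\RR^{ij}$ contributions (the two cubic $\Rc$-trace terms cancel). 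Combined with $\ppt \RR = \triangle\RR + 2|\Rc|^2$ from (\ref{scalar}), the quotient and chain rules applied to $f = |\Rc|^2/\RR^\gamma - \frac{1}{n}\RR^{2-\gamma}$ produce an explicit formula for $\ppt f$.

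In parallel, I would expand $\triangle f$ via the Leibniz rule for $\triangle(uv)$ together with
$$\triangle\RR^{-\gamma} = -\gamma\RR^{-\gamma-1}\triangle\RR + \gamma(\gamma+1)\RR^{-\gamma-2}|\nabla\RR|^2,$$
and the analogous formula for $\RR^{2-\gamma}$. Subtracting, the $\triangle|\Rc|^2$ and $\triangle\RR$ contributions cancel between $\ppt f$ and $\triangle f$, leaving a purely first-order and algebraic expression. The algebraic part comes out to $\frac{4}{\RR^\gamma}\Rm(\Rc,\Rc) - \frac{2\gamma}{\RR^{1+\gamma}}|\Rc|^4 - \frac{2(2-\gamma)}{n}\RR^{1-\gamma}|\Rc|^2$, which one verifies equals the bracketed expression $\frac{2}{\RR^{1+\gamma}}\bigl[(2-\gamma)|\Rc|^2(|\Rc|^2 - \RR^2/n) - 2(|\Rc|^4 - \RR\cdot\Rm(\Rc,\Rc))\bigr]$ by direct expansion, which accounts for the equivalence of the two displayed forms modulo gradient terms.

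The remaining task, and main obstacle, is to regroup the gradient monomials of $\ppt f - \triangle f$ — namely $|\nabla\Rc|^2$, $\nabla|\Rc|^2\cdot\nabla\RR$, $|\Rc|^2|\nabla\RR|^2$, and $|\nabla\RR|^2$ — into the three displayed summands $\frac{2(\gamma-1)}{\RR}\nabla f\cdot\nabla\RR$, $-\frac{2}{\RR^{2+\gamma}}|\RR\nabla_i\RR_{jk} - \nabla_i\RR\,\RR_{jk}|^2$, and $-\frac{(2-\gamma)(\gamma-1)}{\RR^2}|\nabla\RR|^2 f$. The key identity is $\nabla_i|\Rc|^2 = 2\RR_{jk}\nabla_i\RR_{jk}$, which yields the expansion
$$|\RR\nabla_i\RR_{jk} - \nabla_i\RR\,\RR_{jk}|^2 = \RR^2|\nabla\Rc|^2 - \RR\,\nabla\RR\cdot\nabla|\Rc|^2 + |\Rc|^2|\nabla\RR|^2,$$
and together with the expansion $\nabla f = \RR^{-\gamma}\nabla|\Rc|^2 - \gamma\RR^{-\gamma-1}|\Rc|^2\nabla\RR - \frac{2-\gamma}{n}\RR^{1-\gamma}\nabla\RR$ it makes the drift contribution explicit. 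Matching coefficients then reduces to four polynomial identities in the free parameter $\gamma$, which are mechanical to verify; once done, the evolution formula is established.
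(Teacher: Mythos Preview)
Your proposal is correct and follows essentially the same approach as the paper: both start from $\ppt|\Rc|^2 = \triangle|\Rc|^2 - 2|\nabla\Rc|^2 + 4\Rm(\Rc,\Rc)$ and $\ppt\RR = \triangle\RR + 2|\Rc|^2$, and then the work reduces to regrouping the gradient terms via the completed square $|\RR\nabla_i\RR_{jk}-\nabla_i\RR\,\RR_{jk}|^2$. The only cosmetic difference is that the paper splits $f$ into the two summands $|\Rc|^2/\RR^\gamma$ and $\frac{1}{n}\RR^{2-\gamma}$ and writes each of their heat-type evolutions separately (already in the drift form $\triangle + \frac{2(\gamma-1)}{\RR}\nabla(\cdot)\cdot\nabla\RR + \ldots$) before subtracting, whereas you work with $f$ as a whole; the content is identical.
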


\begin{proof}
We have $$\ppt |\Rc|^2=\triangle |\Rc|^2-2|\nabla
\Rc|^2+4\Rm(\Rc,\Rc),$$ where
$$\Rm(\Rc,\Rc)=\RR_{abcd}\RR_{ac}\RR_{bd}.$$
We can further express the term as
$$\Rm(\Rc,\Rc)=\RR_{abcd}\RR_{ac}\RR_{bd}=\frac{1}{n-2}(\frac{2n-1}{n-1}
|\Rc|^2R-2\Rc^3-\frac{\RR^3}{n-1})+\WW(\Rc,\Rc),$$ hence we arrive
at,
$$\ppt |\Rc|^2=\triangle |\Rc|^2-2|\nabla
\Rc|^2+\frac{4}{n-2}(\frac{2n-1}{n-1}|\Rc|^2R-2\Rc^3-\frac{\RR^3}{n-1})
+\WW(\Rc,\Rc).$$ Using this together with the evolution equation
of the scalar curvature $$\ppt \RR=\triangle \RR+2|\Rc|^2,$$ we
have the following two equations:

\begin{align*}
\ppt (\frac{|\Rc|^2}{\RR^{\gamma}})= &\, \triangle
(\frac{|\Rc|^2}{\RR^{\gamma}})+\frac{2(\gamma -1)}{\RR}\nabla
(\frac{|\Rc|^2}{\RR^{\gamma}}) \cdot \nabla \RR
-\frac{2}{\RR^{2+\gamma}} |\RR \nabla_i \RR_{jk}-\nabla_i \RR
\RR_{jk}|^2\\
&\,-\frac{(2-\gamma)(\gamma-1)}{\RR^{2+\gamma}}|\Rc|^2|\nabla
\RR|^2
+\frac{4}{\RR^{\gamma}}\Rm(\Rc,\Rc)-\frac{2\gamma}{\RR^{1
+\gamma}}|\Rc|^4,
\end{align*}
and
\begin{align*}
\ppt \RR^{2-\gamma}= \triangle \RR^{2-\gamma}+\frac{2(\gamma
-1)}{\RR}\nabla \RR^{2-\gamma} \cdot \nabla \RR
+2(2-\gamma){\RR^{1-\gamma}}|\Rc|^2.
\end{align*}
The lemma then follows.
\end{proof}
We can also rewrite the above lemma in the following way.
\begin{lemma} Under the Ricci flow, we have
\begin{align*}
\ppt f= &\, \triangle f+\frac{2(\gamma -1)}{\RR}\nabla f \cdot
\nabla \RR -\frac{2}{\RR^{2+\gamma}} |\RR\nabla_i
\RR_{jk}-\nabla_i \RR \RR_{jk}|^2
-\frac{(2-\gamma)(\gamma-1)}{\RR^{2}}|\nabla \RR|^2 f
\\
&\,+\frac{2}{\RR^{1+\gamma}}\left[(2-\gamma)|\Rc|^2|\EE|^2
-2\QQ+2\RR
\WW(\Rc,\Rc)\right]-\frac{(2-\gamma)(\gamma-1)}{n\RR^{\gamma}}\\
= &\,\triangle f+\frac{2(\gamma -1)}{\RR}\nabla f \cdot \nabla \RR
-\frac{2}{\RR^{2+\gamma}} |\RR\nabla_i \RR_{jk}-\nabla_i \RR
\RR_{jk}|^2 -\frac{(2-\gamma)(\gamma-1)}{\RR^{2}}|\nabla \RR|^2 f
\\
&\,+\frac{2}{\RR^{1+\gamma}}\left[-\gamma
|\EE|^4+\left(\frac{2(n-2)}{n(n-1)}
-\frac{\gamma}{n}\right)|\RR|^2|\EE|^2
-\frac{4}{n-2}\RR \EE^3+2\RR \WW(\EE,\EE)\right]\\ &\,-\frac{(2-\gamma)(\gamma-1)}{n\RR^{\gamma}},
\end{align*}where
$\QQ=|\Rc|^4-\frac{\RR}{n-2}(\frac{2n-1}{n-1}\RR|\Rc|^2-2{\Rc}^3
-\frac{\RR^3}{n-1})$, and $\EE^3=\EE_{ij}\EE_{jk}\EE_{ki}$.
\end{lemma}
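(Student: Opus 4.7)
The plan is to view the second lemma as a purely algebraic rewriting of the first lemma: all time derivatives, Laplacian, gradient, and $|\RR\nabla_i \RR_{jk} - \nabla_i\RR\,\RR_{jk}|^2$ terms are already identical in the two statements, and the final additive term $-\frac{(2-\gamma)(\gamma-1)}{n\RR^\gamma}$ is also unchanged, so the entire task is to rewrite the bracketed curvature expression
\[
(2-\gamma)|\Rc|^2\bigl(|\Rc|^2-\tfrac{1}{n}\RR^2\bigr)-2\bigl(|\Rc|^4-\RR\cdot \Rm(\Rc,\Rc)\bigr)
\]
first in terms of $\QQ$ and $\WW(\Rc,\Rc)$, and then entirely in terms of $|\EE|^k$, $\EE^3$, $\RR$, and $\WW(\EE,\EE)$.

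First I would use the defining identity $\EE=\Rc-\tfrac{\RR}{n}g$ to record $|\Rc|^2-\tfrac{1}{n}\RR^2=|\EE|^2$, which immediately turns the first summand into $(2-\gamma)|\Rc|^2|\EE|^2$. For the second summand, I would invoke the algebraic decomposition
\[
\Rm(\Rc,\Rc)=\frac{1}{n-2}\Bigl(\frac{2n-1}{n-1}\RR|\Rc|^2-2\Rc^3-\frac{\RR^3}{n-1}\Bigr)+\WW(\Rc,\Rc),
\]
which is exactly the identity already used in the proof of the previous lemma. Multiplying through by $\RR$ and subtracting from $|\Rc|^4$, the first four-term bundle is exactly the definition of $\QQ$, so
\[
-2\bigl(|\Rc|^4-\RR\cdot\Rm(\Rc,\Rc)\bigr)=-2\QQ+2\RR\,\WW(\Rc,\Rc).
\]
Combined with the first summand this produces precisely the first of the two forms in the lemma.

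For the second form, I would substitute $\Rc=\EE+\tfrac{\RR}{n}g$ throughout. The trace-free property of $\EE$ gives the expansions
\[
|\Rc|^2=|\EE|^2+\tfrac{\RR^2}{n},\qquad \Rc^3=\EE^3+\tfrac{3\RR}{n}|\EE|^2+\tfrac{\RR^3}{n^2},\qquad |\Rc|^4=|\EE|^4+\tfrac{2\RR^2}{n}|\EE|^2+\tfrac{\RR^4}{n^2}.
\]
Plugging these into the definition of $\QQ$ and simplifying, the pure $\RR^4$ contributions cancel and one finds
\[
\QQ=|\EE|^4+\frac{\RR^2|\EE|^2}{n(n-1)}+\frac{2\RR\,\EE^3}{n-2}.
\]
For the Weyl term, I would use that $\WW$ is totally trace-free (on every pair of indices), so $\WW(g,\cdot)=0$ and consequently $\WW(\Rc,\Rc)=\WW(\EE,\EE)$. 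Substituting back into $(2-\gamma)|\Rc|^2|\EE|^2-2\QQ+2\RR\,\WW(\Rc,\Rc)$ and collecting powers of $|\EE|^2$, the $|\EE|^4$ coefficient collapses to $-\gamma$, the $\RR^2|\EE|^2$ coefficient reduces to $\tfrac{2(n-2)}{n(n-1)}-\tfrac{\gamma}{n}$, and the remaining terms are $-\tfrac{4}{n-2}\RR\EE^3$ and $2\RR\,\WW(\EE,\EE)$, exactly matching the stated expression.

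The only genuine obstacle is arithmetic bookkeeping: making sure the coefficients combine correctly, in particular checking that the cross $\RR^4$ and $\RR^3|\EE|^2$ terms really do cancel in the reduction of $\QQ$, and that $\tfrac{2-\gamma}{n}-\tfrac{2}{n(n-1)}$ simplifies to $\tfrac{2(n-2)}{n(n-1)}-\tfrac{\gamma}{n}$. No new geometric input is needed beyond the trace-freeness of $\WW$ and the decomposition of $\Rm(\Rc,\Rc)$, both of which were already used above.
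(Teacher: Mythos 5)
Your proposal is correct and matches the paper's (implicit) argument: the paper presents this lemma as a direct algebraic rewriting of the preceding one, using exactly the decomposition of $\Rm(\Rc,\Rc)$ already invoked there, the substitution $\Rc=\EE+\tfrac{\RR}{n}g$, and the trace-freeness of $\WW$. Your intermediate identity $\QQ=|\EE|^4+\tfrac{1}{n(n-1)}\RR^2|\EE|^2+\tfrac{2}{n-2}\RR\,\EE^3$ and the coefficient bookkeeping check out.
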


Consider the special case that $\gamma=2$, i.e.,
$$f=\frac{|\EE|^2}{\RR^{2}}=\frac{|\Rc|^2}{\RR^{2}}-
\frac{1}{n},$$ we have

\begin{lemma} \label{fe} Under the Ricci flow, we have
\begin{align*}
\ppt f = &\,\triangle f+\frac{2}{\RR}\nabla f \cdot \nabla \RR
-\frac{2}{\RR^{4}} |\RR\nabla_i \RR_{jk}-\nabla_i \RR \RR_{jk}|^2
\\
&\,+\frac{2}{\RR^{3}}\left[-2
|\EE|^4-\frac{2}{n(n-1)}|\RR|^2|\EE|^2 -\frac{4}{n-2}\RR
\EE^3+2\RR \WW(\EE,\EE)\right]\\
= &\,\triangle f+\frac{2}{\RR}\nabla f \cdot \nabla \RR
-\frac{2}{\RR^{4}} |\RR\nabla_i \RR_{jk}-\nabla_i \RR \RR_{jk}|^2
\\
&\,+4{\RR}\left[- f^2-\frac{1}{n(n-1)}f
-\frac{2}{n-2}\frac{\EE^3}{\RR^3}+\frac{1}{\RR^3}
\WW(\EE,\EE)\right].
\end{align*}
\end{lemma}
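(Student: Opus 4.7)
The plan is to obtain this lemma as a direct specialization of the immediately preceding lemma, whose general-$\gamma$ evolution formula I am allowed to use as input. Setting $\gamma = 2$ collapses several terms, and the remaining expression only needs to be rewritten in terms of $f = |\EE|^2/\RR^2$.

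First I would simply substitute $\gamma = 2$ into the general formula. The prefactor $(2-\gamma)(\gamma-1)$ vanishes, so the two terms
\[
-\frac{(2-\gamma)(\gamma-1)}{\RR^{2}}|\nabla \RR|^2 f \quad\text{and}\quad -\frac{(2-\gamma)(\gamma-1)}{n\RR^{\gamma}}
\]
disappear. The coefficient $2(\gamma-1)/\RR$ in the gradient-coupling term becomes $2/\RR$, matching the claimed expression. In the second (fully expanded) form of the previous lemma, the $-\gamma|\EE|^4$ contribution becomes $-2|\EE|^4$, and the coefficient on $|\RR|^2|\EE|^2$ simplifies as
\[
\frac{2(n-2)}{n(n-1)} - \frac{\gamma}{n}\ =\ \frac{2(n-2) - 2(n-1)}{n(n-1)}\ =\ -\frac{2}{n(n-1)}.
\]
The remaining $-\frac{4}{n-2}\RR\,\EE^3 + 2\RR\,\WW(\EE,\EE)$ terms carry over verbatim. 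This already yields the first displayed identity in the statement.

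To get the second (more compact) form, I would factor $2/\RR^{1+\gamma} = 2/\RR^3$ out of the bracket and repeatedly use the identity $|\EE|^2 = \RR^2 f$. Concretely:
\[
\frac{|\EE|^4}{\RR^3} = \RR f^2,\qquad \frac{|\RR|^2|\EE|^2}{\RR^3} = \RR f,\qquad \frac{\EE^3}{\RR^2} = \RR\cdot\frac{\EE^3}{\RR^3},\qquad \frac{\WW(\EE,\EE)}{\RR^2} = \RR\cdot\frac{\WW(\EE,\EE)}{\RR^3}.
\]
After this substitution the entire cubic/quartic bracket becomes $\RR$ times a sum of four terms with common numerical coefficient, and collecting the constant in front gives exactly $4\RR$ multiplying $-f^2 - f/[n(n-1)] - \frac{2}{n-2}\EE^3/\RR^3 + \WW(\EE,\EE)/\RR^3$, as claimed.

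There is no real obstacle here: the previous lemma does all the heavy lifting (computing $\partial_t|\Rc|^2$, expressing $\Rm(\Rc,\Rc)$ via its Weyl decomposition, and handling the $\nabla f \cdot \nabla \RR$ cross term). The only point requiring genuine attention is the arithmetic simplification of the $|\RR|^2|\EE|^2$ coefficient and the clean conversion of the polynomial-in-$|\EE|$ expression into a polynomial-in-$f$ expression with a $4\RR$ factored out; both amount to a careful bookkeeping exercise rather than any new analytic input.
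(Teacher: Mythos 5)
Your proposal is correct and is exactly the paper's route: the paper obtains this lemma by setting $\gamma=2$ in the preceding general-$\gamma$ evolution formula (the $(2-\gamma)(\gamma-1)$ terms drop, the coefficient $\frac{2(n-2)}{n(n-1)}-\frac{2}{n}=-\frac{2}{n(n-1)}$, and the second form follows from $|\EE|^2=\RR^2 f$). Your arithmetic checks out, so there is nothing to add.
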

To estimate the right-hand side of the above equation, we calim
that there exist positive constants $c_1$, $c_2$ depending only on
$n\geq 3$, such that
$$\left|\frac{2}{n-2}\EE^3\right|\leq c_1|\EE|^3,$$ and
$$|\WW(\EE,\EE)|\leq c_2|\WW||\EE|^2.$$

\begin{remark}
In the above estimates, $c_1 \sim \frac{2}{n(n-2)}$ and $c_2 \sim
n(n-1)(n-2)(n-3)$.
\end{remark}
Plugging these two inequalities into Lemma \ref{fe}, we derive
that

\begin{lemma} Under the Ricci flow, we have
\begin{align*}
\ppt f \leq &\,\triangle f+\frac{2}{\RR}\nabla f \cdot \nabla \RR
+4{\RR}\left[- f^2-\frac{1}{n(n-1)}f
+c_1f^{3/2}+c_2\frac{|\WW|}{\RR} f\right]\\
= &\,\triangle f+\frac{2}{\RR}\nabla f \cdot \nabla \RR
-4{\RR}f\left[ f-c_1f^{1/2}+\frac{1}{n(n-1)} -c_2\frac{|\WW|}{\RR}
\right].
\end{align*}
\end{lemma}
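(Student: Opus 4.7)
The plan is straightforward: begin with Lemma \ref{fe} (the exact evolution equation for $f = |\EE|^2/\RR^2$), discard the manifestly non-positive gradient-mismatch term $-\frac{2}{\RR^{4}}|\RR\nabla_i \RR_{jk}-\nabla_i \RR \RR_{jk}|^2$ to pass from an equality to an inequality, and then substitute the two pointwise algebraic bounds $|\tfrac{2}{n-2}\EE^3| \leq c_1 |\EE|^3$ and $|\WW(\EE,\EE)| \leq c_2 |\WW||\EE|^2$ asserted just above. Using the definition $f = |\EE|^2/\RR^2$, we have $|\EE|^3/\RR^3 = f^{3/2}$ and $|\WW||\EE|^2/\RR^3 = (|\WW|/\RR)\,f$, so inside the bracket of Lemma \ref{fe} the cubic term $-\tfrac{2}{n-2}\EE^3/\RR^3$ is controlled by $c_1 f^{3/2}$ and the Weyl term $\WW(\EE,\EE)/\RR^3$ is controlled by $c_2(|\WW|/\RR)f$. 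Assembling the pieces yields the first displayed inequality, and then factoring out $-4\RR f$ gives the second form.

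For the two pointwise bounds, since $\EE$ is symmetric and trace-free, at each point we diagonalize with eigenvalues $\lambda_i$ satisfying $\sum_i \lambda_i = 0$; then $\EE^3 = \sum_i \lambda_i^3$ and $|\EE|^2 = \sum_i \lambda_i^2$, so the crude estimate $|\lambda_i|\leq |\EE|$ already gives $|\EE^3|\leq |\EE|^3$, while the sharper dimensional constant quoted in the Remark follows from a Lagrange-multiplier optimization exploiting the trace-free constraint. The bound on $\WW(\EE,\EE) = \WW_{ijkl}\EE_{ik}\EE_{jl}$ is a Cauchy--Schwarz pairing in the tensor inner product: view it as the inner product of $\WW$ with the symmetrization of $\EE \otimes \EE$ compatible with the algebraic symmetries of the Weyl tensor, and the combinatorial constant $c_2 = c_2(n)$ accounts for the multiplicity lost when re-expressing this symmetrization as a multiple of $|\EE|^2$.

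The main obstacle is precisely extracting the two pointwise estimates with dimensional constants that are independent of the metric and of $t$; once they are in hand, nothing else in the derivation is delicate. In particular, the step from Lemma \ref{fe} to the displayed inequality is pure substitution, the factoring of $-4\RR f$ is an algebraic identity, and no use of the Ricci-flow equation enters beyond what has already been built into Lemma \ref{fe}. The exact size of $c_1, c_2$ is immaterial at this point; their only role will be to determine, through the coefficient comparison $f - c_1 f^{1/2} + \tfrac{1}{n(n-1)} - c_2 |\WW|/\RR$ in the final bracket, the threshold above which the sign of the bracket becomes favorable for a maximum-principle argument in the proof of Theorem \ref{thm:einstein}.
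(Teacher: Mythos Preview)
Your proposal is correct and follows exactly the paper's approach: drop the non-positive gradient term from Lemma \ref{fe}, insert the two asserted pointwise bounds $\left|\tfrac{2}{n-2}\EE^3\right|\leq c_1|\EE|^3$ and $|\WW(\EE,\EE)|\leq c_2|\WW||\EE|^2$, rewrite in terms of $f$, and factor. The paper's own argument is just the sentence ``Plugging these two inequalities into Lemma \ref{fe}, we derive that \ldots''; your discussion of why the two constants $c_1,c_2$ exist (diagonalization of the trace-free tensor, Cauchy--Schwarz on the Weyl contraction) is more explicit than anything the paper provides.
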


Combining the above inequality and using maximum principle, this
lead to the following

\begin{lemma}Under the Ricci flow, there exists $C_1=C_1(c_1, g(0))\geq c_1>0$, such that
$4f(0)\leq C_1^2$, then
$$f^{1/2}\leq \frac{1}{2}C_1+\sqrt{\frac{1}{4}C_1^2
-\left(\frac{1}{n(n-1)} - c_2\max_{M\times
[0,t]}{\frac{|\WW|}{\RR}}\right)}.$$
\end{lemma}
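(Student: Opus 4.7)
\medskip

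\noindent\textbf{Proof plan.} The proof is an application of the parabolic maximum principle to the differential inequality from the previous lemma, namely
\[
\ppt f \;\leq\; \triangle f+\frac{2}{\RR}\nabla f\cdot\nabla \RR \;-\;4\RR\,f\Bigl[\,f-c_1 f^{1/2}+\tfrac{1}{n(n-1)}-c_2\tfrac{|\WW|}{\RR}\,\Bigr],
\]
compared against the barrier
\[
\Phi(t)\;:=\;\tfrac{1}{2}C_1+\sqrt{\tfrac{1}{4}C_1^2-\Bigl(\tfrac{1}{n(n-1)}-c_2 M(t)\Bigr)},
\qquad M(t)\;:=\;\max_{M\times[0,t]}\tfrac{|\WW|}{\RR}.
\]
The key algebraic fact, which I would check first, is that $\Phi(t)$ is the larger root of $x^2-C_1 x+\tfrac{1}{n(n-1)}-c_2 M(t)=0$, so
\[
\Phi(t)^2+\tfrac{1}{n(n-1)}-c_2 M(t)\;=\;C_1\,\Phi(t).
\]
Also, $M(t)$ is non-decreasing in $t$, so $\Phi(t)$ (and $\Phi(t)^2$) is non-decreasing; moreover the initial condition $4f(0)\leq C_1^2$ gives $f(\cdot,0)^{1/2}\leq C_1/2\leq \Phi(0)$.

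The main step is a first-touching argument. Assume, for contradiction, that $f(x,t)^{1/2}\leq \Phi(t)$ fails somewhere, and let $t_0>0$ be the first time this occurs, achieved at a spatial point $x_0\in M$, so $f(x_0,t_0)=\Phi(t_0)^2$ while $f(\cdot,t)\leq \Phi(t)^2$ for all $t\leq t_0$. At $(x_0,t_0)$ we have $\nabla f=0$ and $\triangle f\leq 0$ by the spatial maximum. Furthermore, since $f(x_0,t)\leq \Phi(t)^2\leq \Phi(t_0)^2=f(x_0,t_0)$ for $t<t_0$ (using monotonicity of $\Phi^2$), the function $t\mapsto f(x_0,t)$ is non-decreasing up to $t_0$, whence $\ppt f(x_0,t_0)\geq 0$. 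Plugging these into the differential inequality,
\[
0\;\leq\;\ppt f(x_0,t_0)\;\leq\;-\,4\RR(x_0,t_0)\,f(x_0,t_0)\cdot B(x_0,t_0),
\]
where $B:=f-c_1 f^{1/2}+\tfrac{1}{n(n-1)}-c_2\tfrac{|\WW|}{\RR}$. Since $\RR>0$ and $f(x_0,t_0)=\Phi(t_0)^2>0$, this forces $B(x_0,t_0)\leq 0$.

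On the other hand, bounding the pointwise value $\tfrac{|\WW|}{\RR}(x_0,t_0)$ above by $M(t_0)$ and using the quadratic identity from the first paragraph,
\[
B(x_0,t_0)\;\geq\;\Phi(t_0)^2-c_1\Phi(t_0)+\tfrac{1}{n(n-1)}-c_2 M(t_0)\;=\;(C_1-c_1)\,\Phi(t_0)\;\geq\;0.
\]
To obtain the desired contradiction I would arrange strict positivity by initially choosing $C_1>c_1$ strictly (e.g.\ replace $C_1$ by $C_1+\varepsilon$ and let $\varepsilon\downarrow 0$ at the end, which is permissible since the conclusion is continuous in $C_1$); then $B(x_0,t_0)>0$ contradicts $B(x_0,t_0)\leq 0$. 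Consequently the first-touching time $t_0$ cannot exist, i.e.\ $f^{1/2}\leq \Phi(t)$ for all $t$, which is the claim.

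\medskip

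\noindent\textbf{Main obstacle.} The only nontrivial subtlety is that the barrier $\Phi(t)$ is not smooth in $t$, since $M(t)$ is only a running supremum of a continuous function and hence Lipschitz but not $C^1$ in general. This is precisely why I avoid differentiating $\Phi^2$ directly and instead exploit monotonicity of $\Phi^2$ together with the first-touching argument: $\ppt f(x_0,t_0)\geq 0$ follows from monotonicity alone and does not require $\Phi$ to be differentiable. The algebraic identity relating $\Phi^2$ to $C_1\Phi$ then converts the maximum-principle sign condition into the inequality $(C_1-c_1)\Phi\leq 0$, which is the actual source of the contradiction and explains the role of the hypothesis $C_1\geq c_1$.
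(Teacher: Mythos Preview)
Your proposal is correct and follows essentially the same maximum-principle approach as the paper: define $\Phi(t)$ as the larger root of the quadratic reaction term, note it is nondecreasing in $t$, verify the initial bound $f(0)^{1/2}\le C_1/2\le\Phi(0)$, and run a first-touching argument. The only cosmetic difference is that the paper first weakens the differential inequality by replacing $c_1$ with $C_1\ge c_1$ in the bracket (valid since $-4\RR f\le 0$), after which $\Phi(t)$ is \emph{exactly} a root of the resulting quadratic and strict positivity of the bracket whenever $f^{1/2}>\Phi$ is automatic---this sidesteps the $\varepsilon$-perturbation you introduce at the end. (A minor wording point: from $f(x_0,t)\le\Phi(t_0)^2=f(x_0,t_0)$ for $t<t_0$ you get that $t_0$ is a left maximum of $t\mapsto f(x_0,t)$, hence $\ppt f(x_0,t_0)\ge 0$; you do not actually need, and do not have, that this map is nondecreasing.)
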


\begin{proof}
Let's denote the right side as $\Phi(t)$, i.e.,
$$\Phi(t)=\frac{1}{2}C_1+\sqrt{\frac{1}{4}C_1^2-\left(\frac{1}{n(n-1)} -
c_2\max_{M\times [0,t]}{\frac{|\WW|}{\RR}}\right)},$$ so $\Phi (t)$
is nondecreasing, by our choice of $C_1$,
$f^{\frac12}(0)\leq \frac{C_1}{2} \leq \Phi(0)$. And $f$ satisfies
\begin{align*}
\ppt f \leq \triangle f+\frac{2}{\RR}\nabla f \cdot \nabla \RR
-4{\RR}f\left[ f-C_1f^{1/2}+\frac{1}{n(n-1)} -c_2\frac{|\WW|}{\RR}
\right].
\end{align*} Notice that $\sqrt{f(0)}\leq \frac{C_1}{2}\leq
\Phi(0)$, then it follows from the maximum principle that
$f^{\frac12}(t)\leq \Phi(t)$. Since if $\max
f^{\frac12}(t)>\Phi(t)$, then we have
$$\ddt^{+} \max f(t) \leq 0.$$
\end{proof}

\begin{proof}(Theorem \ref{thm:einstein}) The inequality
(\ref{pr}) now follows from standard inequalities.
\end{proof}

As a direct consequence of Theorem \ref{thm:einstein}, we have the following claim:

\begin{corollary} \label{1st} Let $(M, g(t))$, $t\in [0,T)$, be an maximal
solution to the Ricci flow, here $T<\infty$. Then  we have
\begin{enumerate}
\item  either $\lim \sup_{[0,T)} \RR=\infty$,

\item or $\lim \sup_{[0,T)} \RR<\infty$ but $\lim \sup_{[0,T)}
\frac{|\WW|}{\RR}\rightarrow \infty$.
\end{enumerate}
\end{corollary}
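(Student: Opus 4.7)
The plan is to establish the dichotomy by assuming case (1) fails and deducing case (2). The main ingredients are Theorem \ref{thm:einstein} (specifically the positive-scalar-curvature version (\ref{pr})), together with Sesum's theorem from \cite{sesum051}, which says that at a finite-time maximal singularity of the Ricci flow the Ricci curvature must blow up, i.e., $\limsup_{[0,T)} |\Rc| = \infty$.

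Suppose $\RR \leq K < \infty$ uniformly on $M \times [0,T)$, so that case (1) does not hold. From the orthogonal decomposition $|\Rc|^2 = |\EE|^2 + \frac{1}{n}\RR^2$, Sesum's blow-up of $|\Rc|$ forces $|\EE|$ itself to blow up: choosing $(x_k, t_k)$ with $t_k \to T$ and $|\Rc|(x_k,t_k) \to \infty$, one has
$$|\EE|^2(x_k,t_k) \;\geq\; |\Rc|^2(x_k,t_k) - \tfrac{1}{n}K^2 \;\longrightarrow\; \infty.$$
Since $\RR(x_k,t_k) \leq K$, it follows that $|\EE|(x_k,t_k)/\RR(x_k,t_k) \geq |\EE|(x_k,t_k)/K \to \infty$.

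Next I would plug this into the pinching estimate (\ref{pr}) evaluated at $(x_k,t_k)$:
$$\frac{|\EE|(x_k,t_k)}{\RR(x_k,t_k)} \;\leq\; C_1 + c_2 \max_{M\times[0,t_k]}\sqrt{\frac{|\WW|}{\RR}}.$$
Since the left-hand side diverges with $k$, the right-hand side must diverge too, giving $\max_{M\times[0,t_k]} |\WW|/\RR \to \infty$. This is exactly $\limsup_{[0,T)} |\WW|/\RR = \infty$, which is case (2).

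There is no genuinely hard step once Theorem \ref{thm:einstein} and Sesum's result are available; the heart of the argument is the algebraic observation that bounded $\RR$ converts a Ricci blow-up into an Einstein-tensor blow-up, which Theorem \ref{thm:einstein} then transports into a blow-up of $|\WW|/\RR$. One small care point worth noting is that Sesum's blow-up point need not be where $\RR$ is close to its supremum, but the argument only uses the global upper bound $\RR \leq K$, so this causes no difficulty. It is precisely here that the improvement over Knopf's estimate (\ref{dk})---which mixed $|\WW|_{\max}$ with $\RR_{\min}$ evaluated at possibly different points---is crucial, since (\ref{pr}) lets us read off the $|\WW|/\RR$ blow-up directly from the pointwise $|\EE|/\RR$ blow-up.
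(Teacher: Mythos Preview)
Your proof is correct and follows essentially the same line as the paper's: assume both $\RR$ and $|\WW|/\RR$ stay bounded, use the pinching estimate (\ref{pr}) to bound $|\EE|$, and contradict the Ricci (or full curvature) blow-up at a finite-time singularity. The paper phrases this as the contrapositive in one sentence, while you spell out the sequence argument via Sesum's theorem and the identity $|\Rc|^2=|\EE|^2+\tfrac{1}{n}\RR^2$; the content is the same. One minor editorial point: your closing remark that the improvement over Knopf's estimate (\ref{dk}) is ``crucial'' for this corollary is a slight overstatement---since $\RR$ is assumed bounded above and is bounded below by the preserved initial minimum, Knopf's version would also yield the dichotomy here; the real payoff of the pointwise form (\ref{pr}) comes later in the dilation-limit analysis of Section~4.
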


\begin{proof}
For any finite time singularity, the whole Riemannian curvature
(or Ricci curvature) blows up at $T$. The Riemannian curvature
tensor is decomposed into the scalar curvature part $\RR$, the
traceless Ricci tensor $\EE$ and the Weyl tensor $\WW$. Following
from Theorem \ref{thm:einstein}, $\EE$ can not blow up if $\RR$
and $\frac{|\WW|}{\RR}$ are both bounded, hence the statement
follows.
\end{proof}

Another interesting application of (\ref{pr}) is the case when the
Weyl tensor $|\WW|$ is controlled by the scalar curvature $\RR$
and traceless Ricci tensor $|\EE|$.

\begin{theorem} (Positive Isotropic Curvature) Let $(M\sp n, g(t))$, $t\in [0,T)$, be an maximal
solution to the Ricci flow, here  $n\geq 4$ is a even positive
integer. Assuming that $g(0)$ has positive isotropic curvature,
 then  we have $\lim
\sup_{[0,T)} \RR=\infty$.

\end{theorem}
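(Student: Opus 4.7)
The plan is to argue by contradiction: suppose $\limsup_{[0,T)} \RR < \infty$, and show that then $|\Rm|$ stays bounded on $[0,T)$, contradicting Hamilton's theorem that $|\Rm|$ must blow up at a finite-time maximal singularity.

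First I would appeal to the preservation of positive isotropic curvature under the Ricci flow (Brendle--Schoen and Nguyen, cited in Section 2): $g(t)$ then has positive isotropic curvature for every $t \in [0,T)$. In particular $\RR > 0$ throughout (positive isotropic curvature implies positive scalar curvature by a standard averaging argument), so the hypothesis of estimate (\ref{pr}) in Theorem \ref{thm:einstein} is available. Moreover, since $n$ is even and at least $4$, the results recalled at the end of Section 2 tell us that positive isotropic curvature implies positivity of the Weitzenb\"{o}ck operator $\PP$.

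Next I would convert positivity of $\PP$ into a pointwise bound on $\WW$. Using the decomposition
$$\PP = \frac{(n-2)\RR}{n(n-1)}\, g\circ g + \frac{n-4}{n-2}\, \EE \circ g - \WW,$$
the condition $\PP \geq 0$ on $\Lambda^2$ bounds the largest eigenvalue of $\WW$ (viewed as a symmetric operator on $\Lambda^2$) by an expression of the form $\tilde{c}_1(n)\RR + \tilde{c}_2(n)|\EE|$. Because $\WW$ is trace-free on $\Lambda^2$ (its eigenvalues sum to zero), an upper bound on the largest eigenvalue automatically controls the smallest eigenvalue from below, yielding a pointwise estimate
$$|\WW| \leq a(n)\,\RR + b(n)\,|\EE|$$
on $M \times [0,T)$ with purely dimensional constants $a,b$.

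Now I would close the loop with Theorem \ref{thm:einstein}. Writing $u = |\EE|/\RR$, the bound above becomes $|\WW|/\RR \leq a + b\,u$, and substituting into (\ref{pr}) yields
$$u(x,t) \leq C_1 + c_2 \sqrt{a + b\,\max_{M\times [0,t]} u}.$$
Taking the supremum on the left produces a quadratic-type inequality in $\max u$ with a uniform solution, so $|\EE| \leq C\RR$ on $M \times [0,T)$. If $\RR$ were uniformly bounded above by some constant $A$, then $|\EE|$ would be bounded, so by the previous step $|\WW|$ would be bounded, hence $|\Rm|$ would be bounded on $[0,T)$, contradicting Hamilton's blow-up theorem. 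Therefore $\limsup_{[0,T)} \RR = \infty$.

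The principal technical obstacle is the second step, passing from positivity of $\PP$ to the two-sided estimate $|\WW| \leq a\RR + b|\EE|$: positivity of $\PP$ only supplies a one-sided (upper) eigenvalue bound, and the reduction to a bound on the operator norm $|\WW|$ relies on the trace-freeness of $\WW$ on $\Lambda^2$. Everything else is either cited machinery---PIC preservation, Hamilton's finite-time blow-up criterion, and estimate (\ref{pr})---or elementary algebraic bootstrapping.
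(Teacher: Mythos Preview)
Your proposal is correct and follows essentially the same route as the paper's proof: preserve PIC, pass to positivity of the Weitzenb\"{o}ck operator in even dimensions, use the trace-freeness of $\WW$ to convert $\PP>0$ into a pointwise bound $|\WW|\le a\RR + b|\EE|$, feed this back into estimate (\ref{pr}) to get $|\EE|/\RR$ uniformly bounded, and conclude via Hamilton's blow-up criterion (equivalently Corollary \ref{1st}). The paper is terser at the step you flag as the principal obstacle (it simply writes ``and $\WW$ is traceless, this implies $\frac{|\WW|}{\RR}<c_3\frac{|\EE|}{\RR}+c_4$''), but your more explicit eigenvalue argument is exactly the content being invoked.
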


\begin{proof}
Since positive isotropic curvature is preserved by the Ricci flow,
moreover, it implies positive Weitzenb\"{o}ck operator in even
dimensions. So we have
$$\PP=\Rc \circ g -2\Rm=
\frac{(n-2)\RR}{n(n-1)}g\circ g+\frac{n-4}{n-2} \EE\circ g
-\WW>0,$$  and $\WW$ is traceless, this implies that
$$\frac{|\WW|}{\RR}<c_3\frac{|\EE|}{\RR}+
c_4.$$ Substituting this into (\ref{pr}), it
follows from elementary inequalities that $$\frac{|\EE|}{\RR} \leq
C.$$ Hence it follows that
$$\frac{|\WW|}{\RR} \leq C.$$ Since positive isotropic curvature implies scalar curvature $\RR>0$, the maximal existence
time $T<\infty$, so $$\lim \sup_{[0,T)} \RR=\infty.$$

\end{proof}

\begin{remark}The author is very grateful to Professor Maria Helena Noronha for several discussions about isotropic curvature and Weitzenb\"{o}ck operator.
\end{remark}

\section{\textbf{Finite-time Singularities, Dilation Limits and Singularity Models}}

In this section, we will use the pinching estimate in Section 3 to study the dilation limit of Ricci flow solutions. We first need to
introducing some notations. In \cite[Sect. 16]{Hsurvey}, Hamilton
introduced the notion of singularity model, roughly speaking, these are dilation limits of the Ricci flow. We briefly describe the strategy here, to find out exact details about how to dilate singularities based on rate of blowup of the curvature, see \cite[Section 16]{Hsurvey} or \cite[Chapter 8]{chowknopf1}. If we dilate the solution to the Ricci flow about a sequence of points and times $(x_i, t_i)$, where $x_i \in M$ and $t_i\rightarrow T$, we may choose the sequence of points and time so that $|\Rm|(x_i, t_i)$ is comparable to the global maximum over the space $M$ and sufficiently large previous time intervals. We now can define a sequence of pointed dilation solutions $(M, g_i(t), x_i)$ by:
 $$g_i(t)=|\Rm|(x_i,t_i)\cdot g(t_i+\frac{t}{|\Rm|(x_i,t_i)}),$$
 for time interval
 $$-t_i |\Rm|(x_i, t_i)\leq t <(T-t_i)|\Rm|(x_i, t_i),$$
 such that the curvature $|\Rm|_{g_i} (x_i, 0)=1$ and the maximum of the
 (Riemannian) curvature of $g_i$ becomes uniformly bounded, hence we
 have a sequence of solutions to the Ricci flow. For finite time
 singularities on closed manifolds, Perelman's No Local Collapsing
 Theorem \cite{perelman1} provides the injectivity radius estimate,
 which is necessary to obtain a noncollapsed limit. Then we can apply Hamilton's Cheeger-Gromov type compactness theorem \cite{Hcomp} to extract a limit solution of the Ricci flow. This is a complete solution to the Ricci flow with bounded curvature. If the solution is Type I, it is an ancient solution; if the solution is Type II, then it is an eternal solution. It is worth to mention that in dimension $3$, all dilation limits have nonnegative sectional curvature due to the pinching estimate of Hamilton \cite{Hsurvey} and T. Ivey \cite{Isoliton}.\\

Our main result in this section is the following:

\begin{theorem}\label{4.1} Let $(M, g(t))$, $t\in [0,T)$, be an maximal
solution to the Ricci flow with positive scalar curvature. Then  we have one of the following:
\begin{enumerate}
\item either $\lim \sup_{[0,T)} R=\infty$,

\item or if $\lim \sup_{[0,T)} R<\infty$, then $\lim \sup_{[0,T)}
\frac{|\WW|}{\RR}=\infty$. This must be a Type II maximal solution,
furthermore, the dilation limit must be a complete Ricci-flat solution
with $\max |\WW|=1$.
\end{enumerate}
\end{theorem}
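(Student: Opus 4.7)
Part (1) and the statement that $\limsup |\WW|/\RR = \infty$ in case (2) are exactly Corollary \ref{1st}. The new content is therefore: assuming $\sup_{M\times[0,T)} \RR \leq K < \infty$, the flow must be Type II and its dilation limit must be complete Ricci-flat with $\max |\WW| = 1$. I would first square the pinching inequality (\ref{pr}) and use $\RR\leq K$ to obtain the pointwise comparison
$$|\EE|^2 \leq 2C_1^2 \RR^2 + 2c_2^2 \RR |\WW| \leq C_K(1+|\WW|),$$
which, together with the orthogonal decomposition $|\Rm|^2 = \tfrac{2\RR^2}{n(n-1)} + \tfrac{4|\EE|^2}{n-2} + |\WW|^2$, forces $\RR/|\Rm|\to 0$, $|\EE|/|\Rm|\to 0$ and $|\WW|/|\Rm|\to 1$ along any sequence with $|\Rm|\to\infty$. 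Since these ratios are scale invariant, they survive dilation. To rule out Type I, I would invoke Enders--M\"uller--Topping \cite{emt10} and Le--Sesum \cite{ls10}: on a Type I maximal solution $\RR$ must blow up at $T$, contradicting $\RR\leq K$; hence the flow is Type II.

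Next I would run Hamilton's Type II dilation procedure. Pick $T_i\nearrow T$ and $(x_i,t_i)\in M\times[0,T_i]$ maximizing $(T_i-t)|\Rm|(x,t)$; set $\lambda_i=|\Rm|(x_i,t_i)$ and
$$g_i(t) = \lambda_i\cdot g\bigl(t_i + t/\lambda_i\bigr).$$
The Type II hypothesis gives $\lambda_i(T_i-t_i)\to\infty$, so the time domain of $g_i$ exhausts $(-\infty,\infty)$; the maximality of $(x_i,t_i)$ yields $|\Rm|_{g_i}(x,t)\leq (T_i-t_i)/(T_i-t_i-t/\lambda_i)\to 1$ on each compact time interval, with $|\Rm|_{g_i}(x_i,0)=1$. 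Perelman's no local collapsing estimate together with Hamilton's Cheeger--Gromov compactness (via Shi's local derivative estimates) extract a complete eternal limit $(M_\infty, g_\infty(t), x_\infty)$ of bounded curvature satisfying $|\Rm_\infty|\leq 1$.

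Finally I would identify this limit. Scale invariance gives $\RR_{g_i}\leq K/\lambda_i\to 0$ uniformly on parabolic neighborhoods, so $\RR_\infty\equiv 0$; substituting into (\ref{scalar}) at the limit forces $|\Rc_\infty|^2\equiv 0$, so the limit is Ricci-flat and $\EE_\infty\equiv 0$. Hence $|\Rm_\infty|=|\WW_\infty|\leq 1$ everywhere. Evaluating the pointwise comparison from the first paragraph at $(x_i,t_i)$ and using $\lambda_i\to\infty$ gives $|\WW|_{g_i}(x_i,0)\to 1$, so $|\WW_\infty|(x_\infty,0)=1$ and therefore $\max|\WW_\infty|=1$. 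The main obstacle, once the pinching estimate of Section 3 is in hand, is the standard but delicate Cheeger--Gromov extraction: the bare $|\Rm|$-bound must be boosted to full parabolic regularity (Shi) and the injectivity radii controlled (Perelman); the pinching estimate (\ref{pr}) then carries out the identification of the limit with no additional work.
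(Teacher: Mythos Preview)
Your proposal is correct and matches the paper's argument closely; in fact the paper gives two proofs of Theorem~\ref{4.1}, and your write-up blends them. The first proof in the paper uses the pinching estimate (\ref{pr}) directly to force $|\tilde{\EE}|\to 0$ on the dilated sequence, while the second (shorter) proof bypasses (\ref{pr}) entirely: once $\tilde{\RR}\to 0$ by scaling, the evolution equation $\partial_t \RR = \Delta \RR + 2|\Rc|^2$ on the limit forces $\Rc_\infty\equiv 0$. You run the second argument for Ricci-flatness and then appeal to the first-paragraph ``pointwise comparison'' to get $|\WW|_{g_i}(x_i,0)\to 1$; but this last step is redundant, since once the limit is Ricci-flat the decomposition gives $|\Rm_\infty|=|\WW_\infty|$, and $|\Rm_\infty|(x_\infty,0)=1$ comes for free from the normalization $|\Rm|_{g_i}(x_i,0)=1$ and smooth convergence.

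One small imprecision to fix: the inequality (\ref{pr}) carries a space-time maximum $\max_{M\times[0,t]}\sqrt{|\WW|/\RR}$ on the right, so squaring does \emph{not} give the genuinely pointwise bound $|\EE|^2 \le 2C_1^2\RR^2 + 2c_2^2\,\RR\,|\WW|$ you wrote. What you actually obtain is $|\EE|^2(x,t)\le C_K\bigl(1+\max_{M\times[0,t]}|\WW|\bigr)$ (using the lower bound $\RR\ge r_0>0$). This is still enough for your purpose at $(x_i,t_i)$, because the Type~II point-picking guarantees $\max_{M\times[0,t_i]}|\Rm|\le \lambda_i$, hence $|\EE|^2(x_i,t_i)=O(\lambda_i)$ and $|\EE|^2/\lambda_i^2\to 0$; but as noted above you do not actually need this step.
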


\begin{remark}
In case (2) of Theorem \ref{4.1}, $\lim \sup_{[0,T)}
\frac{|\WW|}{\RR}=\infty$ is equivalent to $\lim \sup_{[0,T)}
|\WW|=\infty$.
\end{remark}

We first consider Type I solutions, which has been studied extensively recently by  Enders, Muller and Topping \cite{emt10}, Le and Sesum \cite{ls10}, also by Q. S. Zhang and the author \cite{cz10}.

\begin{corollary} If the solution of the Ricci flow is a Type I solution,
then we have $\frac{|\WW|}{\RR}$ is bounded, hence $\RR\rightarrow
\infty$.
\end{corollary}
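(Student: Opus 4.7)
The plan is to prove that $|\WW|/\RR$ is uniformly bounded on $M\times[0,T)$ by a blow-up argument; once this is in hand, the conclusion $\RR\to\infty$ follows immediately from Corollary~\ref{1st}, which states that at any finite-time singularity either $\limsup\RR=\infty$ or $\limsup |\WW|/\RR=\infty$.

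I would argue by contradiction. Suppose there is a sequence $(x_i,t_i)\in M\times[0,T)$ with $t_i\nearrow T$ and $|\WW|(x_i,t_i)/\RR(x_i,t_i)\to\infty$. The Type I hypothesis gives $|\Rm|\cdot(T-t)\leq C$ on $M\times[0,T)$, and Perelman's no-local-collapsing theorem supplies a uniform $\kappa$-noncollapsing on all scales. A standard Perelman-style point-picking argument, together with the scale invariance of the ratio $|\WW|/\RR$, lets me further assume that $(x_i,t_i)$ realizes the Type I singular rate in the sense of Enders, M\"uller and Topping. Setting $Q_i:=|\Rm|(x_i,t_i)$ and forming the parabolic rescalings $g_i(s):=Q_i\cdot g(t_i+s/Q_i)$, Hamilton's Cheeger--Gromov compactness theorem yields a subsequential pointed limit $(M_\infty,g_\infty(s),x_\infty)$, a complete ancient Ricci flow of bounded curvature. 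By the Enders--M\"uller--Topping (and independently Naber) structure theorem for Type I singularities, this limit is a nonflat gradient shrinking Ricci soliton.

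The scale invariance of $|\WW|/\RR$ forces $(|\WW|_\infty/\RR_\infty)(x_\infty,0)=+\infty$, so $\RR_\infty(x_\infty,0)=0$ while $|\WW|_\infty(x_\infty,0)>0$. But any complete gradient shrinking Ricci soliton satisfies $\RR_\infty\geq 0$ (Chen), and the strong maximum principle then upgrades $\RR_\infty(x_\infty,0)=0$ to $\RR_\infty\equiv 0$ on $M_\infty$. Combined with the shrinker equation $\Rc_\infty+\nabla^2 f=\tfrac12 g_\infty$, Ricci flatness gives $\nabla^2 f=\tfrac12 g_\infty$, and by Tashiro's rigidity theorem $(M_\infty,g_\infty)$ must then be the Gaussian shrinking soliton on $\mathbb{R}^n$. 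The Gaussian soliton is flat, so $|\WW|_\infty\equiv 0$, contradicting $|\WW|_\infty(x_\infty,0)>0$.

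The main technical obstacle I anticipate is the point-picking step: the naive sequence witnessing $|\WW|/\RR\to\infty$ need not a priori correspond to base points where the rescaled flows converge to a nontrivial shrinker, so a careful adjustment is required to ensure that both the blow-up of $|\WW|/\RR$ and the applicability of the Enders--M\"uller--Topping structure theorem hold along the same subsequence. The scale invariance of $|\WW|/\RR$ together with the uniform Type I bound and a standard Perelman-style point-picking lemma should make this step routine.
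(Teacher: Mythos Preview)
Your approach and the paper's both rest on the Enders--M\"uller--Topping (EMT) analysis of Type~I singularities, but they invoke different pieces of it. The paper's argument is shorter and more direct: assuming $|\WW|/\RR$ is unbounded, one gets a point $p$ at which $|\WW|$ and hence $|\Rm|$ blow up; then \cite[Theorem~1.8]{emt10} says that in the Type~I setting all singular notions coincide and the scalar curvature blows up at the \emph{same} Type~I rate as $|\Rm|$ there. Since $|\WW|\le |\Rm|\le C/(T-t)$ while $\RR\ge c/(T-t)$, the ratio $|\WW|/\RR$ is immediately bounded --- no blow-up limit, no soliton rigidity, no Tashiro. The paper itself remarks that this is essentially already contained in \cite{emt10,ls10} and does not claim an independent proof.

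Your detour through the shrinking-soliton structure theorem and Pigola--Rimoldi--Setti/Tashiro rigidity is reasonable in spirit, but the ``technical obstacle'' you flag is a genuine gap rather than a routine step. Perelman-style point-picking replaces $(x_i,t_i)$ by a nearby space-time point with comparable or larger $|\Rm|$; the scale invariance of $|\WW|/\RR$ under \emph{metric dilation} says nothing about how this ratio behaves under such a \emph{spatial displacement}, so there is no reason the ratio remains large at the adjusted base point. Equivalently, after rescaling you cannot conclude $|\WW|_\infty(x_\infty,0)>0$: the limit could a~priori be the flat Gaussian shrinker, and your contradiction evaporates. Closing this gap amounts to knowing that $\RR$ blows up at the Type~I rate wherever $|\Rm|$ does --- which is precisely the EMT input the paper invokes directly, making the soliton-rigidity detour unnecessary.
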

\begin{proof}
If the solution is of Type I and $\frac{|\WW|}{\RR}$ is unbounded.
Since $\RR>0$ has a lower bound, $|\WW|$ needs to blow up at some
point $p$, and hence $|\Rm|$ also blows up at $p$. By
\cite[Theorem 1.8]{emt10}, scalar curvature blow up is equivalent
to whole curvature blow up, and all the blow up rates are same. So
$|\Rm|(p)$ or $|\WW|(p)$ is comparable to the maximum blow up
curvature, but the scalar curvature $\RR(p)$ also blows up at the same rate. Hence $\frac{|\WW|}{\RR}$ has to be bounded and $\RR\rightarrow
\infty$.
\end{proof}

\begin{remark}
This was essentially proven in \cite{emt10} and \cite{ls10},
notice that we used the fact from \cite[Theorem 1.8]{emt10}
that all Type I singularity notions are equivalent, so we do not
provide an independent proof here.
\end{remark}

\begin{remark} From  \cite{emt10}, \cite{ls10} and \cite{cz10}, such
type I dilation limit must be a nontrivial gradient Ricci
solitons. Notice that the dilation limit can not be Ricci flat, otherwise this
contradicts  a theorem \cite[Theorem 3]{prs09} of S. Pigola, M.
Rimoldi and A. G. Setti.
\end{remark}

Combine Corollary \ref{1st} and the above discussion, we now can
finish our proof of Theorem \ref{4.1}:

\begin{proof}
We assume that $\lim \sup_{[0,T)} \RR <\infty$, since this is a
finite time singularity, the whole Riemannian curvature tensor
$|\Rm|$ blows up. So $\lim \sup_{[0,T)} \frac{|\WW|}{\RR} =
\infty$, otherwise by (\ref{pr}), the traceless Ricci tensor is
also bounded and contradicts it is a finite-time singularity.
Since $R>0$ has a lower bound, we have $\lim \sup_{[0,T)} |\WW| =
\infty$. In this case,  rescaling with respect to $|\Rm|$ is same
as rescaling with respect to $|\WW|$. Hence we have
$|\tilde{\WW}|=1$ at the origin and new time $0$ after dilation.
By Theorem \ref{thm:einstein},  we have
\begin{align} \frac{|\EE|}{|\WW|_{max}} \leq C_1 \frac{\RR}{|\WW|_{max}}
+C_2 \sqrt{\frac{\RR}{|\WW|_{max}}},
\end{align}
as $t \rightarrow T$, $|\tilde{\EE}| \rightarrow 0$, $\tilde{\RR}
\rightarrow 0$, hence $\tilde{\Rc} \rightarrow 0$. Hence the
dilation limit is a complete solution to the Ricci flow with bounded curvature, furthermore, it is Ricci-flat with $\max |\tilde{\WW}|=1$.

\end{proof}

\begin{remark}
It came to our attention that there is a short proof for Theorem
\ref{4.1} using blow-up argument, without using (\ref{pr}), for
the completeness of the discussion, we include it here.
\end{remark}

\begin{proof}
If both the scalar curvature $\RR$ and the Weyl tensor $\WW$ are
uniformly bounded, then after take a blow-up limit around the
sequence of points $(x_i, t_i)$ as we described at the beginning
of this section, the dilation limit will have $\tilde{\RR}=0$ and
$|\tilde{\WW}|=0$. By the evolution equation of the scalar
curvature, it must also be Ricci-flat (for this, we only need that
$\RR$ is uniformly bounded), hence the dilation limit is actually
flat. This contradicts our choice of  base points $(x_i,
t_i)$ in the blow-up procedure. The rest follows the same way as before.
\end{proof}

{\bf Acknowledgement:} The author would like to thank Professors
Bennett Chow, John Lott, Maria Helena Noronha, Duong H. Phong,
Gang Tian and Zhou Zhang for their interest and helpful
suggestions.


\def\cprime{$'$}

\end{document}